\documentclass{article}
\usepackage{amsthm,amssymb, amsmath, amsfonts}
\usepackage{verbatim}
\usepackage{braket}
\usepackage{cancel}
\usepackage{enumitem}
\usepackage{mathrsfs}
\usepackage[english]{babel}
\usepackage{datetime}

\setenumerate[1]{label={(\arabic*)}}

\bibliographystyle{abbrv}

\newcommand{\ret}{\vartriangleleft}
\newcommand{\emb}{\hookrightarrow}
\newcommand{\bb}[1] {\mathbb{#1}}

\theoremstyle{definition}
\newtheorem{defn}{Definition}[section]

\theoremstyle{plain}
\newtheorem{lemma}{Lemma}
\newtheorem{theorem}{Theorem}

\title{\textbf{Characterizing Bipartite Graphs which Admit a $k$-NU Polymorphism via Absolute Retracts}}
\author{
Adam Jaffe \\
\footnotesize{\texttt{ajaffe@uwaterloo.ca}} \\
\small{\textit{University of Waterloo, 200 University Avenue W., Waterloo, ON, Canada, N2L 3G1}}
}
\date{}

\begin{document}

\maketitle

\textbf{Abstract:} We first introduce the class of bipartite absolute retracts with respect to tree
obstructions with at most $k$ leaves.  Then, using the theory of
homomorphism duality, we show that this class of absolute
retracts coincides exactly with the bipartite graphs
which admit a $(k+1)$-ary near-unanimity polymorphism.  This result
mirrors the case for reflexive graphs and generalizes
a known result for bipartite graphs which admit a $3$-ary near-unanimity
polymorphism.

\vspace{2mm}
\small{\textbf{Keywords:} Absolute retract, Duality, Near-unanimity polymorphism}

\vspace{2mm}

\small{\textbf{AMS subject classifications:} 05C75, 08B05}

\section{Introduction}

Given a digraph $\bb{H}$ with vertex set $H$, a \textit{k-ary near-unanimity polymorphism},
or $k$-NU polymorphism,
is a graph homomorphism $f: \bb{H}^k \rightarrow \bb{H}$ that satisfies the
near-unanimity property
\[
	f(x, \dots, x) = f(y,x,x,\dots,x) = f(x,y,x,\dots,x) = \dots = f(x,x,x,\dots,y) = x,
\]
for all vertices $x,y \in H$.
The problem of determining
which digraphs admit a near-unanimity polymorphism of some arity
has been studied both for reflexive digraphs, where every vertex has a loop, 
and for simple graphs.
It turns out that if a simple graph is to admit
a near-unanimity polymorphism, then it must be bipartite \cite{irreflex}.

The concept of \textit{absolute retracts} is intimately linked
to near-unanimity polymorphisms.  A graph is an absolute retract
with respect to isometry if it is a retract of each
graph in which it is isometrically embedded.
Absolute retracts and their relation to near-unanimity
polymorphisms have been studied for both reflexive and 
simple graphs - see for instance 
\cite{Reading-1, Reading-2, Reading-3, reflex, Bandelt ARI, Bandelt maj, Loten}.
In particular, Bandelt \cite{Bandelt maj} showed that a bipartite graph
admits a $3$-ary near-unanimity polymorphism if and only if it is an
absolute retract with respect to isometry.
For reflexive digraphs, the connection between the two concepts has been worked out for arbitrary arities;
a reflexive digraph admits a $(k+1)$-ary near-unanimity polymorphism if and only if
it is an absolute retract with respect to tree obstructions with at most $k$ leaves \cite{reflex, Loten}.

In this article, we establish a similar
characterization of bipartite graphs which admit a $k$-ary near-unanimity polymorphism 
thus mirroring the reflexive case and generalizing Bandelt's result for bipartite graphs.
The technique crucially relies upon the finite 
homomorphism duality result for strongly bipartite digraphs admitting
a near-unanimity polymorphism proven by Feder et al. \cite{irreflex}.

\section{Preliminaries}

A \textit{digraph} is a relational structure with only a single binary relation.
A digraph will be denoted with a blackboard font (e.g. $\bb{G}, \bb{H}$) while the corresponding
Latin equivalent (e.g. $G, H$) will be used to denote the vertex set.
If $\bb{H}$ is a digraph then $E(\bb{H})$ denotes the binary relation on $\bb{H}$.
A member of $E(\bb{H})$ is called an \textit{edge}.
An edge will typically be denoted with no parentheses - e.g. the edge
$(a,b)$ will be written simply as $ab$.

An \textit{undirected graph} is a digraph whose edge relation is symmetric.
If $\bb{H}$ is an undirected graph and $u \in H$ is a vertex, the \textit{degree of $u$},
denoted $deg(u)$, is the number of edges incident to $u$.
A \textit{strongly biparite digraph} is a digraph with
the property that its vertex set
may be partitioned into a set of sinks and a set of sources.
If a graph is bipartite then it is undirected.
A digraph is non-trivial if it has more than one vertex and at least one edge.
If $\bb{H}$ is a digraph, then $\bb{H}_u$ is the undirected graph
that results from making all edges in $\bb{H}$ symmetric.
A digraph $\bb{H}$ is \textit{connected} if there is a path
between any two vertices in $\bb{H}_u$.
If $a,b \in H$, the \textit{distance between $a$ and $b$ in $\bb{H}$} is the length
of a shortest path between $a$ and $b$ in $\bb{H}_u$ and will be denoted
by $d_{\bb{H}}(a,b)$.

A \textit{subgraph of} $\bb{H}$ is a digraph $\bb{H'}$ such that
$H' \subseteq H$ and $E(\bb{H'}) \subseteq E(\bb{H})$.
For this article, if $\bb{H}$ is an undirected graph
then any subgraph of $\bb{H}$ is assumed to also be an undirected graph.
In other words, when removing an edge from $\bb{H}$ to yield a proper subgraph,
both orientations of the edge must be removed.

Let $\bb{G}$ and $\bb{H}$ be digraphs.  A \textit{homomorphism}
from $\bb{G}$ to $\bb{H}$ is a map $f$ from $G$ to $H$ such that
if $ab$ is an edge in $\bb{G}$ then $f(a)f(b)$ is an edge in $\bb{H}$.
Suppose that $\bb{G}$ contains $\bb{H}$ as an induced subgraph.
$\bb{H}$ is a \textit{retract} of $\bb{G}$ if there exists a homomorphism
$r: \bb{G} \rightarrow \bb{H}$ that acts as the identity on $H$.
The map $r$ is called a \textit{retraction}.
If $\bb{H}$ is a retract of $\bb{G}$ we write $\bb{H} \ret \bb{G}$.

Let $\bb{H}_1, \dots, \bb{H}_k$ be digraphs.
The \textit{relational product of} $\bb{H}_1, \dots \bb{H}_k$ is the digraph
$\bb{H}_1 \times \dots \times \bb{H}_k$ with vertex set
$H_1 \times \dots \times H_k$ and an edge between $(a_1, \dots, a_k)$ and $(b_1, \dots, b_k)$
if and only if $a_ib_i \in E(\bb{H}_i)$ for each $i \in \Set{1, \dots, k}$.
The digraph $\bb{H}^k$ is the relational product of $k$ copies of $\bb{H}$.

\begin{defn}

	Let $\bb{H}$ be a digraph.  An
	\textit{$\bb{H}$-coloured digraph} is a digraph $\bb{G}$
	equipped with unary relations
	$S_h \subseteq G$ for every $h \in H$.  If $v \in S_h(\bb{G})$ then
	we say that $v$ is \textit{coloured with $h$}.
	$\bb{H}$ supports a natural $\bb{H}$-coloured digraph, denoted
	$\bb{H}^c$, where
	$S_h(\bb{H}) = \Set{h}$ for all $h \in H$. 
	If $\bb{G}$ contains $\bb{H}$ as a subgraph then
	we can colour each vertex $h \in H$ with $h$.
	Explicitly, let $\bb{G}_{\bb{H}}$ be the
	$\bb{H}$-coloured digraph where $S_h(\bb{G}) = \Set{h}$ for all
	$h \in H$ and whose underlying digraph is $\bb{G}$.
\end{defn}

Homomorphisms between $\bb{H}$-coloured digraphs are defined in the natural way.
Let $\bb{X}$ and $\bb{Y}$ be $\bb{H}$-coloured digraphs.  A map
$f: \bb{X} \rightarrow \bb{Y}$ is an $\bb{H}$-coloured digraph homomorphism
if it is a homomorphism of the underlying digraphs from $\bb{X}$ to $\bb{Y}$
and satisfies $f(S_h(X)) \subseteq S_h(Y)$ for each $h \in H$.
A \textit{substructure} of $\bb{X}$ is an $\bb{H}$-coloured digraph
$\bb{X'}$ whose underlying digraph is a 
subgraph of the underlying digraph
of $\bb{X}$ and $S_h(X') \subseteq S_h(X)$ for each $h \in H$.
$\bb{X'}$ is a \textit{proper substructure} if $\bb{X'}$ is a substructure
of $\bb{X}$ and $\bb{X'} \neq \bb{X}$.
The distance between two vertices in an $\bb{H}$-coloured digraph is the 
distance between the vertices in the underlying digraph.

\begin{defn}
	Let $\bb{H}$ be a digraph and let $\bb{X}$ and $\bb{Y}$ be $\bb{H}$-coloured digraphs.
	$\bb{X}$ is an \textit{obstruction for} $\bb{Y}$
	if $\bb{X}$ does not admit a homomorphism to $\bb{Y}$ and we write
	$\bb{X} \nrightarrow \bb{Y}$.
	If $\bb{X}$ is not an obstruction for $\bb{Y}$ then we say that
	$\bb{X}$ is \textit{feasible for} $\bb{Y}$ and write $\bb{X} \rightarrow \bb{Y}$.
	$\bb{X}$ is a \textit{critical obstruction} for $\bb{Y}$
	if it is an obstruction for $\bb{Y}$ yet any proper substructure of $\bb{X}$
	is feasible for $\bb{Y}$.
\end{defn}

Let $\bb{X}$ and $\bb{Y}$ be $\bb{H}$-coloured digraphs for some digraph $\bb{H}$.
If $\bb{X}$ is an obstruction for $\bb{Y}$, then there must be a substructure of $\bb{X}$
that is a critical obstruction for $\bb{Y}$.  Explicitly, among all substructures of $\bb{X}$
that are still obstructions for $\bb{Y}$, any minimal substructure is a critical obstruction for $\bb{Y}$.

We can rephrase the concept of a retraction in terms of
homomorphisms between $\bb{H}$-coloured digraphs.
If $\bb{G}$ contains $\bb{H}$ as an induced subgraph then 
$\bb{G}_{\bb{H}}$ is feasible for $ \bb{H}^c$ if and only if
$\bb{H}$ is a retract of $\bb{G}$.
There are simple necessary conditions for $\bb{H}$ to be a retract of $\bb{G}$.
For instance, $\bb{G}$ and $\bb{H}$ must have the same chromatic number,
and $\bb{H}$ must be isometrically embedded in $\bb{G}$ in the sense that
for any $a,b \in H$ it must be that $d_{\bb{H}}(a,b) = d_{\bb{G}}(a,b)$.
Each necessary condition for the existence of a retraction can be used
to define a class of graphs, the absolute retracts,
for whom the necessary condition is also sufficient.
The following definition gives rise to a necessary condition for 
the existence of a retraction
and thus a class of absolute retracts.

\begin{defn}
\leavevmode
\begin{enumerate}
\item
	Let $\bb{H}$ be a bipartite graph.
	$\bb{T}$ is an  \textit{$\bb{H}$-tree} if
	$\bb{T}$ is an $\bb{H}$-coloured tree 
	such that the leaves are exactly the
	coloured elements of $\bb{T}$.
	Further, we require that
	if $v_a, v_b \in T$ are leaves with colours $a, b$ respectively, then 
	$d_{\bb{T}}(v_a, v_b) \equiv d_{\bb{H}}(a,b) \pmod 2$.

\item
	Let $\bb{H}$ be a strongly bipartite digraph.
	We say that $\bb{T}$ is a \textit{directed
	$\bb{H}$-tree} if $\bb{T}$ is an $\bb{H}$-coloured
	strongly bipartite tree such that the leaves are exactly the coloured
	elements of $\bb{T}$.  Further,
	if a leaf $l \in T$ is coloured with $h \in H$ that is a sink (or source) in
	$\bb{H}$, then the unique edge incident to $l$
	must be incoming (resp. outgoing).

	In both cases we require that each leaf is uniquely coloured.
\end{enumerate}
\end{defn}

Let $\bb{G}$ be a bipartite graph and $\bb{H}$ a retract of $\bb{G}$.
Then any $\bb{H}$-tree $\bb{T}$ that is feasible for $\bb{G}_\bb{H}$ must also be feasible
for $\bb{H}^c$.  Indeed, if $r: \bb{G}_{\bb{H}} \rightarrow \bb{H}^c$ 
and $\phi: \bb{T} \rightarrow \bb{G}_{\bb{H}}$ are homomorphisms,
then certainly $r \circ \phi$ is a homomorphism from $\bb{T}$ to $\bb{H}^c$.
In other words, for $\bb{H}$ to be a retraction of $\bb{G}$, each
$\bb{H}$-tree obstruction for $\bb{H}^c$ must also be an obstruction for
$\bb{G}_{\bb{H}}$.  This is the necessary condition used to define
the following class of absolute retracts - with the caveat that we
restrict to $\bb{H}$-tree obstructions with a bounded number of leaves.

\begin{defn}
	Let $\bb{H}$ be a bipartite graph and $k \geq 2$.
	We say that $\bb{H}$ is a \textit{bipartite absolute retract with respect
	to $k$-trees} if whenever a bipartite graph $\bb{G}$ contains
	$\bb{H}$ as an induced subgraph such that all
	$\bb{H}$-tree obstructions for $\bb{H}^c$ with at most $k$ 
	leaves are also obstructions for $\bb{G}_{\bb{H}}$, then
	$\bb{H}$ is a retract of $\bb{G}$.
\end{defn}

The bipartite absolute retracts with respect to $2$-trees
are exactly the bipartite absolute retracts with respect to isometry.
Indeed, if a bipartite graph $\bb{H}$ is isometrically embedded in a 
bipartite graph $\bb{G}$, then all $\bb{H}$-tree obstructions for $\bb{H}^c$ with
$2$ leaves must also be obstructions for $\bb{G}_{\bb{H}}$.
The following result generalizes Bandelt's \cite{Bandelt ARI}
result for graphs which admit a $3$-NU polymorphism
to graphs which admit a $k$-NU polymorphism for arbitrary $k$, the proof of 
which appears at the end of the next section.

\begin{theorem}
\label{theThm}
	Let $\bb{H}$ be a connected, bipartite graph and $k \geq 2$.
	$\bb{H}$ is a bipartite absolute retract
	with respect to $k$-trees if and only if
	it admits a $(k+1)$-NU polymorphism.
\end{theorem}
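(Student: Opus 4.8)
The plan is to prove both implications after first reducing from connected bipartite graphs to strongly bipartite digraphs, where the finite duality of Feder et al.\ is available. Since $\bb{H}$ is connected and bipartite it has an essentially unique bipartition, and orienting every edge from one part to the other produces a strongly bipartite digraph $\vec{\bb{H}}$ with $\vec{\bb{H}}_u = \bb{H}$. I would first record the dictionary that makes this reduction faithful: a $(k+1)$-NU polymorphism of $\bb{H}$ and of $\vec{\bb{H}}$ are interderivable, using connectivity and the near-unanimity identities to see that such a map respects the orientation; an $\bb{H}$-tree is, after the same orientation, exactly a directed $\vec{\bb{H}}$-tree (the parity condition $d_{\bb{T}}(v_a,v_b)\equiv d_{\bb{H}}(a,b)\pmod 2$ becomes the sink/source incidence condition on leaves); and $\bb{H}$ is a bipartite absolute retract with respect to $k$-trees iff $\vec{\bb{H}}$ is the directed analogue. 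This bookkeeping is routine but is where the various ``$\bmod 2$'' hypotheses are discharged, so I would carry it out carefully.

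For the direction ($\Leftarrow$), assume $\bb{H}$ admits a $(k+1)$-NU polymorphism $f$. The conceptual core is a leaf bound: every critical $\bb{H}$-tree obstruction for $\bb{H}^c$ has at most $k$ leaves. Indeed, if a critical obstruction $\bb{T}$ had leaves $\ell_1,\dots,\ell_m$ with $m\ge k+1$, then relaxing the colour constraint at one leaf yields a proper substructure, which by criticality is feasible; doing this for $k+1$ of the leaves produces homomorphisms $h_1,\dots,h_{k+1}\colon\bb{T}\rightarrow\bb{H}$ where $h_i$ respects every leaf colour except possibly that of $\ell_i$. Then $t\mapsto f(h_1(t),\dots,h_{k+1}(t))$ is a homomorphism to $\bb{H}$, and at each leaf at least $k$ of the $k+1$ inputs equal the required colour, so the near-unanimity identities force the combined map to respect all leaf colours, contradicting that $\bb{T}$ is an obstruction. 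Feeding this leaf bound into the Feder et al.\ finite duality (whose obstructions are directed trees) shows that the directed $\bb{H}$-trees with at most $k$ leaves already form a complete obstruction set for $\bb{H}^c$. The absolute-retract property is then immediate by contraposition: if $\bb{G}$ contains $\bb{H}$ and satisfies the hypothesis of the definition yet $\bb{H}$ is not a retract of $\bb{G}$, then $\bb{G}_{\bb{H}}\nrightarrow\bb{H}^c$, so by duality some $\le k$-leaf tree obstruction for $\bb{H}^c$ maps into $\bb{G}_{\bb{H}}$ and is thereby feasible for it, contradicting the hypothesis.

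For the direction ($\Rightarrow$), assume $\bb{H}$ is an absolute retract with respect to $k$-trees and build the indicator structure $\bb{I}$ on vertex set $H^{k+1}$ with coordinatewise (product) edges, colouring a tuple with $x$ whenever at least $k$ of its entries equal $x$; a homomorphism $\bb{I}\rightarrow\bb{H}^c$ is precisely a $(k+1)$-NU polymorphism. Gluing, for each $x$, all tuples coloured $x$ to a single vertex turns the colours into an embedded copy of $\bb{H}$ and yields a graph $\bb{G}$ containing $\bb{H}$; that $\bb{H}$ sits inside $\bb{G}$ as an \emph{induced} subgraph is a pigeonhole, for if an $x$-coloured tuple is product-adjacent to a $y$-coloured tuple then, each having at least $k$ of its $k+1$ entries fixed, the two agreement sets must meet (here $k\ge 2$ is used), forcing $xy\in E(\bb{H})$. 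Granting that $\bb{G}$ is bipartite with $\bb{H}$ induced, it remains to verify that every $\bb{H}$-tree obstruction for $\bb{H}^c$ with at most $k$ leaves is an obstruction for $\bb{G}_{\bb{H}}$. Given a homomorphism $\phi$ from such a tree $\bb{T}$ into $\bb{G}_{\bb{H}}$, I would lift it, edge by edge, using acyclicity of $\bb{T}$ to avoid consistency clashes, to a homomorphism $\tilde\phi\colon\bb{T}\rightarrow\bb{H}^{k+1}$ with coordinate homomorphisms $\phi_1,\dots,\phi_{k+1}\colon\bb{T}\rightarrow\bb{H}$. At each coloured leaf the representative tuple has at least $k$ entries equal to the leaf colour, so at most one coordinate fails that leaf; with at most $k$ leaves, at most $k$ of the $k+1$ coordinates are ever spoiled, so some coordinate $\phi_{i^\ast}$ respects every leaf colour and witnesses $\bb{T}\rightarrow\bb{H}^c$. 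Hence the only $\le k$-leaf tree obstructions for $\bb{H}^c$ remain obstructions for $\bb{G}_{\bb{H}}$, the absolute-retract property supplies the retraction $\bb{H}\ret\bb{G}$, equivalently $\bb{I}\rightarrow\bb{H}^c$, equivalently the desired $(k+1)$-NU polymorphism.

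The main obstacle I anticipate is not either pigeonhole argument, both of which are clean, but the reduction layer that makes them legitimate: ensuring the passage to strongly bipartite digraphs is faithful in both directions, that the Feder et al.\ duality is applied in exactly the coloured setting used here, and above all that the quotient $\bb{G}$ in the converse is a genuine bipartite graph with $\bb{H}$ induced. The parity of $k+1$ interacts awkwardly with the bipartition of the naive undirected product, which is precisely why I would perform the whole converse inside $\vec{\bb{H}}$, where the product is automatically strongly bipartite (all-source and all-sink tuples, with mixed tuples isolated) and the leaf incidence conditions align with the near-unanimity colouring. Verifying that the lift $\tilde\phi$ exists and respects colours in that directed setting is the step I would treat most carefully.
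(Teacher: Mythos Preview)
Your overall strategy matches the paper's: the $(\Leftarrow)$ direction via the Feder et al.\ duality transported to the undirected setting, and the $(\Rightarrow)$ direction via the product/indicator construction together with a coordinate pigeonhole. Your $(\Leftarrow)$ argument is essentially correct; the direct criticality argument you give for the leaf bound is valid (it reproves part of what Lemma~1 already supplies), and the elementary obstructions and the directed--undirected translation are routine, as you note.

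The $(\Rightarrow)$ direction, however, has a genuine gap precisely where you flag it. The quotient map $q\colon \bb{H}^{k+1}\to\bb{G}$ is \emph{not} a covering, so acyclicity of $\bb{T}$ alone does not guarantee that $\phi\colon\bb{T}\to\bb{G}_{\bb{H}}$ lifts to $\tilde\phi\colon\bb{T}\to\bb{H}^{k+1}$. Concretely, suppose an internal vertex $p$ of $\bb{T}$ has $\phi(p)$ equal to the collapsed $H$-vertex $x$, and $p$ has two tree-neighbours $c_1,c_2$ with $\phi(c_1),\phi(c_2)$ uncollapsed tuples. The edge $\phi(p)\phi(c_j)$ in $\bb{G}$ only says that \emph{some} $x$-coloured tuple is product-adjacent to $\phi(c_j)$; there need not be a single $x$-coloured tuple adjacent to both (for $k=2$, take $\phi(c_1)=(a,a,b)$ and $\phi(c_2)=(a,c,a)$ with $a$ adjacent to $x$ but $b,c$ not). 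Your edge-by-edge lift commits to a representative for $p$ from one side and may then fail on the other, so ``acyclicity avoids consistency clashes'' is not the right mechanism here.

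The paper closes this gap not by lifting but by first arranging that no internal vertex maps into the $H$-part of $\bb{K}^{\emb}$. One takes $\bb{T}$ to be a \emph{minimal} critical $\bb{H}$-tree obstruction for $\bb{H}^c$ among those feasible for $\bb{K}^{\emb}_{\bb{H}}$. If some uncoloured $p\in T$ had $\phi(p)$ coloured with $x$, split $\bb{T}$ at $p$ into $\deg(p)$ smaller $\bb{H}$-trees, reattaching $p$ to each as a new leaf coloured $x$; each piece still maps to $\bb{K}^{\emb}_{\bb{H}}$, and at least one must remain an obstruction for $\bb{H}^c$, contradicting minimality. Once every non-leaf of $\bb{T}$ lands in $H^{k+1}$, no lift is needed: the coordinate projections $\pi_j\circ\phi$ are already homomorphisms on the non-leaf part, and then your pigeonhole (each leaf spoils at most one of the $k+1$ coordinates, and there are at most $k$ leaves) yields an unspoiled coordinate, contradicting that $\bb{T}$ was an obstruction. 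Inserting this minimality-and-splitting step into your $(\Rightarrow)$ argument makes it complete and brings it in line with the paper's proof.
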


\section{Trees and Duality}

\begin{defn}
\leavevmode
\begin{enumerate}
\item 
	Let $\bb{H}$ be a digraph.
	A \textit{homomorphism duality} for
	$\bb{H}^c$ is a set $\mathcal{F}$ of $\bb{H}$-coloured digraphs
	such that if $\bb{X}$ is an $\bb{H}$-coloured digraph, then
	$\bb{X} \nrightarrow \bb{H}^c$ if and only if
	there exists some $\bb{G} \in \mathcal{F}$ such that
	$\bb{G} \rightarrow \bb{X}$.  If $\bb{H}^c$ admits a finite homomorphism
	duality then we say $\bb{H}^c$ has \textit{finite homomorphism duality}. 

\item
	Let $\bb{H}$ be an undirected graph.
	An \textit{undirected homomorphism duality for $\bb{H}^c$}
	is a set $\mathcal{F}$ of $\bb{H}$-coloured undirected graphs
	such that if $\bb{X}$ is an $\bb{H}$-coloured undirected graph,
	then $\bb{X} \nrightarrow \bb{H}^c$ if and only if
	there exists some $\bb{G} \in \mathcal{F}$ such that
	$\bb{G} \rightarrow \bb{X}$.
\end{enumerate}
\end{defn}

Let $\bb{H}$ be a strongly bipartite digraph.
If an $\bb{H}$-coloured  digraph $\bb{X}$ were to admit a homomorphism to $\bb{H}^c$
then it must be strongly bipartite, have no vertex with multiple colours,
and it must be coloured consistently with $\bb{H}$ in the sense that
a sink (or source) in $\bb{X}$ cannot be coloured with a source (or sink) in $\bb{H}$.
Thus any homomorphism duality for $\bb{H}^c$
must include the following critical obstructions.

\begin{defn}[See {\cite{irreflex}}]
	Let $\bb{H}$ be a non-trivial, connected, strongly bipartite digraph
	with source set $D$ and sink set $U$.  The \textit{elementary
	critical obstructions} for $\bb{H^c}$, denoted by $\mathcal{D}(\bb{H}^c)$,
	are the following $\bb{H}$-coloured digraphs:
	\begin{enumerate}[label=(\Alph*)]
		\item For all distinct $a,b \in H$, a single vertex coloured with $a$ and $b$.
		\item For all $d \in D$, two vertices with a single (directed) edge joining them, with sink coloured with $d$.
		\item For all $u \in U$, two vertices with a single (directed) edge joining them, with source coloured with $u$.
		\item The directed path of length $2$ with no coloured vertices.
	\end{enumerate}
\end{defn}

Similarly, we can define elementary critical obstructions for
bipartite graphs.

\begin{defn}
	Let $\bb{H}$ be a non-trivial, connected, bipartite graph.
	The elementary critical obstructions for $\bb{H^c}$, denoted
	by $\mathcal{D}(\bb{H}^c)$, are the following $\bb{H}$-coloured undirected graphs:
	\begin{enumerate}[label=(\Alph*)]
		\item For all distinct $a,b \in H$, a single vertex coloured
			with $a$ and $b$.
		\item For all $a,b \in H$, a path of length $l$ joining
			a vertex coloured with $a$ to a vertex coloured
			with $b$ such that $d_{\bb{H}}(a,b) \not\equiv l \pmod 2$.
		\item All cycles of odd length with no coloured vertices.
	\end{enumerate}
\end{defn}

Note that the elementary critical obstructions for both
strongly bipartite digraphs and for bipartite graphs are all indeed
critical obstructions.  There are only a finite number of
elementary critical obstructions for strongly bipartite digraphs,
however this is not true for bipartite graphs.

The following result
will be key in proving Theorem \ref{theThm}.

\begin{lemma}[See {\cite[Theorem 3.1 and the proof of Theorem 4.2]{irreflex}}]
	Let $\bb{H}$ be a connected, strongly bipartite digraph.
	Then $\bb{H}^c$ has finite homomorphism duality if and only if
	$\bb{H}$ admits an NU polymorphism.  Further, if
	$\bb{H}$ admits a $(k+1)$-NU polymorphism, then $\bb{H}^c$ has
	a finite homomorphism duality consisting of $\mathcal{D}(\bb{H}^c)$
	and a finite number of directed $\bb{H}$-trees with at most $k$ leaves,
	each of which are critical obstructions for $\bb{H}^c$.
\end{lemma}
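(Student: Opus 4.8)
The plan is to prove the two implications of the equivalence separately and to track the arity throughout, since the content of the ``Further'' clause is precisely that a $(k+1)$-NU polymorphism matches tree obstructions with at most $k$ leaves. The main tool is the standard correspondence between polymorphisms and homomorphisms of an indicator structure. Let $\bb{P}$ be the $\bb{H}$-coloured digraph whose underlying digraph is $\bb{H}^{k+1}$ and in which a tuple $t$ is coloured with $x$ exactly when at least $k$ of its coordinates equal $x$; this colouring is well defined because two distinct values cannot each occur $k$ times among $k+1$ coordinates when $k \geq 2$. A homomorphism $\bb{P} \rightarrow \bb{H}^c$ is precisely a $(k+1)$-NU polymorphism of $\bb{H}$, so producing an NU polymorphism is the same as showing that $\bb{P}$ is feasible for $\bb{H}^c$.

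For the forward direction, assume $\bb{H}$ admits a $(k+1)$-NU polymorphism $f$. The heart of the argument is the leaf bound, which I would prove directly. Let $\bb{T}$ be any critical obstruction for $\bb{H}^c$ with leaves $\ell_1, \dots, \ell_m$ coloured $h_1, \dots, h_m$, and suppose toward a contradiction that $m \geq k+1$. For each $i$, deleting $\ell_i$ from its colour class yields a proper substructure, which by criticality is feasible; hence there is a digraph homomorphism $\phi_i \colon \bb{T} \rightarrow \bb{H}$ with $\phi_i(\ell_j) = h_j$ for every $j \neq i$. Choosing $k+1$ distinct indices $i_0, \dots, i_k$ and setting $\psi = f(\phi_{i_0}, \dots, \phi_{i_k})$ coordinatewise gives a digraph homomorphism $\bb{T} \rightarrow \bb{H}$, since $f$ is a polymorphism. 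At each leaf $\ell_j$ the tuple $(\phi_{i_0}(\ell_j), \dots, \phi_{i_k}(\ell_j))$ has all entries equal to $h_j$ except possibly the single entry indexed by $j$ itself, so the near-unanimity identities (and, when $j$ is not among the chosen indices, idempotency) force $\psi(\ell_j) = h_j$. Thus $\psi \colon \bb{T} \rightarrow \bb{H}^c$ respects every colour, contradicting $\bb{T} \nrightarrow \bb{H}^c$. Hence every critical obstruction has at most $k$ leaves.

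It then remains to see that the critical obstructions form a finite family consisting of the elementary obstructions $\mathcal{D}(\bb{H}^c)$ together with directed $\bb{H}$-trees. That a near-unanimity polymorphism of arity $k+1$ yields bounded-width (treewidth-$k$) duality is the Feder--Vardi theorem, and for the strongly bipartite class one argues that every critical obstruction either lies in $\mathcal{D}(\bb{H}^c)$ or unfolds to a directed $\bb{H}$-tree, the colour-consistency and sink/source conditions being exactly those recorded in the definition of a directed $\bb{H}$-tree. The main obstacle is finiteness: a tree obstruction with few leaves could a priori still have arbitrarily long directed paths between branch or leaf vertices, so one must bound the size of critical obstructions. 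I would do this with a pumping argument --- a sufficiently long directed segment must repeat a local consistency state, and the NU polymorphism permits contracting such a segment while preserving both infeasibility and criticality, which is impossible once the segment exceeds a length determined by $|H|$ and $k$. This size bound together with the leaf bound leaves only finitely many critical obstructions up to isomorphism.

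For the reverse direction, suppose $\bb{H}^c$ has finite homomorphism duality. By the structure theory of finite duality the obstruction family may be taken to consist of finitely many trees; let $k$ be the maximum number of leaves occurring among them, and form the indicator structure $\bb{P}$ of arity $k+1$ as above. If $\bb{H}$ admitted no $(k+1)$-NU polymorphism then $\bb{P} \nrightarrow \bb{H}^c$, so some tree obstruction $\bb{T}$ with leaves $\ell_1, \dots, \ell_m$ of colours $h_1, \dots, h_m$ and $m \leq k$ admits a homomorphism $\alpha \colon \bb{T} \rightarrow \bb{P}$. Writing $\alpha = (\alpha_0, \dots, \alpha_k)$ in coordinates, each $\alpha_j \colon \bb{T} \rightarrow \bb{H}$ is a digraph homomorphism, and colour-respect at $\ell_i$ forces at least $k$ of the values $\alpha_0(\ell_i), \dots, \alpha_k(\ell_i)$ to equal $h_i$, so the set $B_i$ of coordinates that fail to send $\ell_i$ to $h_i$ has at most one element. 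Since $\bigl|\bigcup_i B_i\bigr| \leq m \leq k < k+1$, some coordinate $j$ lies outside every $B_i$, and then $\alpha_j \colon \bb{T} \rightarrow \bb{H}^c$ respects all colours, contradicting $\bb{T} \nrightarrow \bb{H}^c$. Hence $\bb{P}$ is feasible and $\bb{H}$ admits a $(k+1)$-NU polymorphism, completing the equivalence.
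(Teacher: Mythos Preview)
The paper does not prove this lemma at all: it is stated with a citation to \cite{irreflex} (Theorem~3.1 and the proof of Theorem~4.2 there) and used as a black box in the proof of Theorem~\ref{theThm}. So there is no ``paper's own proof'' to compare against; you are attempting to reconstruct the cited result.

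Your outline is the right shape, and two of its three pieces are complete and correct. The leaf-bound argument in the forward direction (combining $k+1$ partial homomorphisms through the NU polymorphism to contradict criticality when $m\geq k+1$) is standard and fully justified as written. The pigeonhole argument in the reverse direction (finding a coordinate $j$ outside every $B_i$ because $\sum|B_i|\leq k<k+1$) is also correct; it is, in fact, the same counting trick the paper itself uses in the $(\Rightarrow)$ half of Theorem~\ref{theThm}.

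The genuine gap is precisely where you flag the ``main obstacle'': the forward-direction claim that the critical obstructions are \emph{finitely many} directed $\bb{H}$-trees. You invoke Feder--Vardi for bounded treewidth and then assert that strongly bipartite critical obstructions ``unfold'' to trees, and you describe a pumping argument for the size bound without executing it. Both steps are nontrivial. Bounded treewidth does not by itself give a tree duality, and the reduction to trees in the strongly bipartite case uses specific structural features worked out in \cite{irreflex}. Likewise the size bound --- that a long enough segment in a critical tree obstruction can be contracted while preserving infeasibility --- is exactly the technical content of Theorem~3.1 there, and your one-sentence sketch does not supply it. In the reverse direction you similarly lean on ``the structure theory of finite duality'' to replace an arbitrary finite dual family by trees; this is a theorem (of Ne\v{s}et\v{r}il--Tardif type, suitably adapted to coloured structures), not an observation, and should be named. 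In short: your proposal correctly isolates the easy parts and correctly identifies the hard part, but the hard part remains a promissory note rather than a proof.
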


Let $\bb{H}$ be an undirected graph.
The following construction gives a correspondence
between $\bb{H}$-coloured undirected graphs and 
undirected graphs that contain $\bb{H}$ as an induced subgraph.
Let $\bb{G}$ be an $\bb{H}$-coloured
undirected graph such that no vertex has more than one colour
and there are no edges between coloured vertices
whose colours are not adjacent in $\bb{H}$. 
Define a graph that contains $\bb{H}$ as an induced subgraph called the
\textit{$\bb{H}$-embed} of $\bb{G}$, denoted $\bb{G}^{\emb}$, as follows.

Let $U$ be the
set of non-coloured vertices of $\bb{G}$.
$\bb{G}^{\emb}$ has vertex set $H \cup U$ and
edges on each part as they are in their original graph.
Moreover there is a (symmetric) edge between $h \in H$ and $u \in U$ if
$u$ is adjacent in $\bb{G}$ to some $c \in S_h(\bb{G})$.

\begin{lemma}[See {\cite[Lemma 5.6]{reflex}}]
	Let $\bb{H}$ be an undirected graph and let $\bb{G}$ be an
	$\bb{H}$-coloured undirected graph that satisfies the requirements
	of the above construction.
	Then $\bb{G} \rightarrow \bb{H}^c$ if and only if
	$\bb{H} \ret \bb{G}^{\emb}$.
\end{lemma}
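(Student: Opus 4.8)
The plan is to prove the biconditional by establishing a tight correspondence between homomorphisms $\bb{G} \to \bb{H}^c$ and retractions $\bb{H} \ret \bb{G}^{\emb}$. The natural approach is to construct the retraction directly from a homomorphism and vice versa, exploiting the definition of the $\bb{H}$-embed.

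\medskip

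\textbf{Forward direction.} Suppose $\phi: \bb{G} \to \bb{H}^c$ is an $\bb{H}$-coloured homomorphism. I would define a map $r: \bb{G}^{\emb} \to \bb{H}$ by setting $r(h) = h$ for each $h \in H$ and $r(u) = \phi(u)$ for each $u \in U$ (the non-coloured vertices). Since $\phi$ respects colours, any vertex of $\bb{G}$ carrying colour $h$ is sent to $h$, so this is well-defined and acts as the identity on $H$. The key step is verifying that $r$ is a homomorphism. Edges within $U$ are inherited directly from $\bb{G}$, and $\phi$ preserves them. Edges within $H$ map to themselves, so they are fine since $\bb{H}$ is an induced subgraph. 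The subtle case is a mixed edge $hu$ in $\bb{G}^{\emb}$ with $h \in H$ and $u \in U$; by the construction of the $\bb{H}$-embed, such an edge exists precisely because $u$ is adjacent in $\bb{G}$ to some $c \in S_h(\bb{G})$. Then $\phi(c)\phi(u) = h\,\phi(u)$ is an edge in $\bb{H}$, and since $r(h)r(u) = h\,\phi(u)$, the edge is preserved. This shows $r$ is a retraction, so $\bb{H} \ret \bb{G}^{\emb}$.

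\medskip

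\textbf{Reverse direction.} Conversely, suppose $r: \bb{G}^{\emb} \to \bb{H}$ is a retraction. I would define $\psi: \bb{G} \to \bb{H}^c$ by $\psi(v) = h$ if $v$ carries colour $h$, and $\psi(v) = r(v)$ if $v$ is non-coloured. The colour condition for $\psi$ to be an $\bb{H}$-coloured homomorphism holds by fiat on coloured vertices. To check edge preservation in $\bb{G}$, I would again split on the type of edge: edges between two non-coloured vertices are preserved since $r$ is a homomorphism on the inherited $U$-part; an edge from a non-coloured $u$ to a coloured vertex $c \in S_h(\bb{G})$ forces (by construction) the edge $hu$ into $\bb{G}^{\emb}$, whence $r(h)r(u) = h\,r(u)$ is an edge and $\psi$ sends $cu$ to $h\,r(u)$; and edges between two coloured vertices must have adjacent colours by the hypothesis on $\bb{G}$, so they are preserved. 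Hence $\psi$ is a homomorphism $\bb{G} \to \bb{H}^c$.

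\medskip

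\textbf{Main obstacle.} The hard part will be handling the edges between two coloured vertices in both directions, since the $\bb{H}$-embed construction collapses all vertices sharing a colour $h$ into the single vertex $h$; I must use the standing hypothesis that there are no edges between coloured vertices with non-adjacent colours to ensure nothing is lost or illegally created, and I should be careful that distinct vertices of $\bb{G}$ sharing a colour do not produce inconsistent images. A secondary technical point is confirming that the $U$-edges and the mixed edges do not interfere, i.e. that the vertex identifications in forming $\bb{G}^{\emb}$ are compatible with the (well-defined, single-valued) maps on both sides.
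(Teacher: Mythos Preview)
The paper does not supply its own proof of this lemma; it is quoted from \cite[Lemma~5.6]{reflex} and stated without argument. So there is nothing in the paper to compare against beyond the definitions of $\bb{G}^{\emb}$ and of $\bb{H}^c$.

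Your proposal is correct and is exactly the natural argument. The forward and reverse constructions you give are the expected ones, and your edge-by-edge verification covers all cases. The concerns you flag in your ``Main obstacle'' paragraph are already resolved by the standing hypotheses on $\bb{G}$: the assumption that each vertex carries at most one colour makes $\psi$ well-defined on coloured vertices, and the assumption that no edge of $\bb{G}$ joins coloured vertices with non-adjacent colours is precisely what you need for the coloured--coloured edge case in the reverse direction. There is no hidden subtlety in the identifications: non-coloured vertices are carried over to $\bb{G}^{\emb}$ bijectively as the set $U$, so the maps are single-valued on both sides.
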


The following lemma allows us to leverage the power
of Lemma 1 for use on bipartite graphs.

\begin{lemma}[See {\cite[Lemma 5.1]{irreflex}}] 
	Let $\bb{H}$ be a bipartite graph with partition $A$ and $B$.
	Let $\bb{H}_d$ be the strongly bipartite digraph
	constructed from $\bb{H}$ by directing all the edges from $A$ to $B$
	(or from $B$ to $A$).
	Then $\bb{H}$ admits a $k$-NU polymorphism if and only if $\bb{H}_d$
	admits a $k$-NU polymorphism.
\end{lemma}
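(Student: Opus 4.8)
The plan is to work with the common vertex set $H = A \cup B$ of $\bb{H}$ and $\bb{H}_d$ and to compare the two relational powers directly. The key structural observation is that an edge of $\bb{H}_d^k$ from $(a_1,\dots,a_k)$ to $(b_1,\dots,b_k)$ exists precisely when every $a_i \in A$, every $b_i \in B$, and each $a_ib_i \in E(\bb{H})$; hence every vertex of $\bb{H}_d^k$ outside $A^k \cup B^k$ is isolated, and the non-trivial part of $\bb{H}_d^k$ is exactly the subgraph $\bb{P}$ of $\bb{H}^k$ induced on $A^k \cup B^k$, with all edges oriented from $A^k$ to $B^k$. I also use that the underlying graph of $\bb{P}$ is connected: given two tuples in $A^k$, join each coordinate by an even walk in $\bb{H}$, pad the walks to a common length using an incident edge, and read off a walk that stays inside $A^k \cup B^k$; each vertex of $A^k$ then reaches $B^k$ through a single edge. (I assume $\bb{H}$ connected and non-trivial, which is the situation needed for Theorem~\ref{theThm}.)

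For the forward implication, suppose $f$ is a $k$-NU polymorphism of $\bb{H}$ and set $g := f$ as a map on $H^k$. The near-unanimity identities are inherited verbatim, so it remains to check that $g$ is a homomorphism $\bb{H}_d^k \to \bb{H}_d$, and by the structural observation this amounts to showing $f(A^k) \subseteq A$ and $f(B^k) \subseteq B$. Restricting $f$ to $\bb{P}$ gives a homomorphism from a connected bipartite graph (parts $A^k$ and $B^k$) into $\bb{H}$, so the side indicator $\sigma\colon H \to \{0,1\}$ (with $\sigma \equiv 0$ on $A$ and $\sigma \equiv 1$ on $B$) composed with $f$ flips along every edge of $\bb{P}$ and is therefore constant on each part; evaluating on a diagonal tuple $(a,\dots,a)$ with $a \in A$ pins the constant on $A^k$ to $0$, whence $f(A^k)\subseteq A$ and likewise $f(B^k)\subseteq B$. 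Thus $g$ preserves orientation and is a $k$-NU polymorphism of $\bb{H}_d$.

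For the converse, let $g$ be a $k$-NU polymorphism of $\bb{H}_d$; the same connectivity argument gives $g(A^k)\subseteq A$ and $g(B^k)\subseteq B$, so $g$ is already $\bb{H}$-homomorphic on the edges of $\bb{P}$. The obstruction is that $\bb{H}^k$ has edges with no counterpart in $\bb{H}_d^k$: the \emph{mixed} edges, along which some coordinates point $A\to B$ and others $B\to A$, on which $g$ gives no information. I would decompose $\bb{H}^k$ into connected components, which are indexed by sign patterns $\tau \in \{0,1\}^k$ up to complementation, and build $f$ on each. On $\bb{P}$ itself I keep $f = g$, which handles all diagonal and same-side near-unanimity tuples. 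On a mixed component $\mathcal{C}$, the coordinate projections already show that homomorphisms $\mathcal{C}\to\bb{H}$ exist, so the only genuine requirement is to find one that also sends each forced tuple $(y,x,\dots,x)\in\mathcal{C}$ (with $\sigma(x)\neq\sigma(y)$) to $x$. My approach is to pull back $g$ along a homomorphism $\eta\colon \mathcal{C}\to\bb{P}$ carrying the two colour classes of $\mathcal{C}$ onto $A^k$ and $B^k$ and sending every such forced tuple to the diagonal $(x,\dots,x)$; then $f := g\circ\eta$ is an $\bb{H}$-homomorphism on $\mathcal{C}$ respecting the near-unanimity identities, since $g$ is idempotent on diagonals.

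The reverse direction is where essentially all the difficulty sits, and constructing the alignment $\eta$ is the main obstacle. It cannot be obtained from any fixed relabelling $H\to H$ applied coordinatewise: flipping the minority coordinates of a forced tuple is forced to depend on the remaining coordinates --- in $(x,x,y)$ the entry $y$ must go to $x$, but in $(z,z,y)$ the same entry must go to $z$ --- so no single map works, and a naive flip would destroy the near-unanimity values. I therefore expect the heart of the proof to be a global extension argument: showing that on each mixed component the partial assignment ``forced tuples $\mapsto$ diagonals'' always extends to a homomorphism into the connected graph $\bb{P}$, and then verifying that the resulting piecewise $f$ is genuinely edge-preserving across all of $\bb{H}^k$. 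Establishing this extension --- equivalently, that $g$'s way of combining coordinates on $A^k\cup B^k$ can be transported to every sign pattern --- is the crux of the argument.
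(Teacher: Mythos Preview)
The paper does not prove this lemma; it is quoted from \cite[Lemma~5.1]{irreflex}, so there is no in-paper argument to compare against. I will therefore comment on the proposal on its own merits.

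Your forward direction is correct: the parity/connectivity argument pins $f(A^k)\subseteq A$ and $f(B^k)\subseteq B$, and since all tuples outside $A^k\cup B^k$ are isolated in $\bb{H}_d^{\,k}$, setting $g=f$ gives a $k$-NU polymorphism of $\bb{H}_d$.

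The reverse direction is where your proposal is genuinely incomplete. You correctly isolate the problem --- build, on each mixed component $\mathcal{C}$, a homomorphism $\eta:\mathcal{C}\to\bb{P}$ sending every forced near-unanimity tuple to the corresponding diagonal --- but you stop short of producing $\eta$ and instead argue that no coordinatewise relabelling $H\to H$ can work. That observation is true but misleading: the map you need is not a relabelling of $H$, it is a \emph{reindexing of coordinates}. Concretely, for $k\ge 3$ the only mixed components carrying near-unanimity constraints are those whose sign pattern has a single minority coordinate, say position $i$; on such a component define
\[
\eta(x_1,\dots,x_k)\ =\ (x_1,\dots,x_{i-1},\,x_j,\,x_{i+1},\dots,x_k)
\]
for any fixed $j\neq i$. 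This is a homomorphism $\mathcal{C}\to\bb{P}$ (each edge of $\mathcal{C}$ becomes an $A^k$--$B^k$ edge of $\bb{H}_d^{\,k}$), and it sends $(x,\dots,x,y,x,\dots,x)$ to $(x,\dots,x)$, so $g\circ\eta$ returns $x$ as required. On the remaining mixed components (those with at least two coordinates on each side, which occur only when $k\ge 4$) there are no near-unanimity constraints at all, so any coordinate projection suffices. Since distinct components of $\bb{H}^k$ share no edges, the piecewise definition is automatically a homomorphism on all of $\bb{H}^k$. With this one missing construction supplied, your outline becomes a complete proof.
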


\begin{proof}[Proof of Theorem \ref{theThm}]

	($\Rightarrow$)
	The argument for this implication is adapted from \cite[Lemma 3.5]{Loten}.
	Let $\bb{H}$ be a connected, bipartite absolute retract with respect to $k$-trees.
	Let $\bb{K}$ be the $\bb{H}$-coloured undirected graph
	with underlying graph $\bb{H}^{k+1}$ and each
	near-unanimous (constant, except perhaps for one coordinate) vertex coloured with its near-unanimous value.
	Then $\bb{H}$ admits a $(k+1)$-NU polymorphism if and only if
	$\bb{K} \rightarrow \bb{H}^c$, which by Lemma 2 is true
	if and only if $\bb{H} \ret \bb{K}^{\emb}$.
	Note that $\bb{K}^{\emb}$ contains $\bb{H}$ as an induced subgraph
	and is bipartite.

	Suppose that $\bb{H}$ is not a retract of $\bb{K}^{\emb}$.
	Then there is an $\bb{H}$-tree $\bb{T}$ with at most
	$k$ leaves that admits a homomorphism
	$\phi: \bb{T} \rightarrow \bb{K}_{\bb{H}}^{\emb}$, 
	but is an obstruction for $\bb{H}^c$.
	Since there is a substructure of $\bb{T}$ that is a critical obstruction for $\bb{H}^c$ and
	is again an $\bb{H}$-tree, 
	we may assume that $\bb{T}$ is a critical obsutrction for $\bb{H}^c$.
	Moreover, we may assume that $\bb{T}$ is minimal in the following sense:
	among all critical $\bb{H}$-tree obstructions of $\bb{H}^c$ with
	at most $k$ leaves 
	which are feasible for $\bb{K}_{\bb{H}}^{\emb}$,
	$\bb{T}$ has the least number of vertices.

	Suppose that
	$u \in T$ is a non-coloured vertex but $\phi(u) \in \bb{K}^{\emb}_{\bb{H}}$ is coloured with some $h \in H$.
	Split $\bb{T}$ into $deg(u)$ $\bb{H}$-trees by removing $u$
	from $\bb{T}$ and then adding $u$ back as a leaf coloured with $h$ to each
	of the resulting trees.  At least one of the resulting trees,
	say $\bb{T}'$, must be an obstruction for $\bb{H}^c$.
	This contradicts the minimality of $\bb{T}$, since
	there is a substructure of $\bb{T}'$ that is a critical
	obstruction of $\bb{H}^c$ yet still admits a homomorphism
	to $\bb{K}_{\bb{H}}^{\emb}$.  Thus the image under $\phi$ of any
	non-coloured vertex of $\bb{T}$ is non-coloured in $\bb{K}^{\emb}_{\bb{H}}$.

	Let $l_1, \dots, l_k$ be the leaves of $\bb{T}$.
	For each $i \in \Set{1, \dots, k}$, let $h_i \in H$ be the colour of $l_i$ and let $u_i \in T$ be its unique neighbour.
	Then $\phi(u_i) \in H^{k+1}$ is adjacent to the unique vertex coloured with $h_i$ in $K^{\emb}_{\bb{H}}$.
	Therefore, at least $k$ of the coordinates of $\phi(u_i)$
	are adjacent to $h_i$ in $\bb{H}$.  Thus there is some index $p \in \Set{1, \dots k+1}$ 
	such that for every $i \in \Set{1, \dots, k}$, 
	$h_i\pi_p(\phi(u_i))$ is an edge of $\bb{H}$, where $\pi_p$ is the $p-th$ projection.
	Thus the map from $\bb{T}$ to $\bb{H}^c$ that sends leaves to their colour and sends
	non-leaves $x$ to $\pi_p(\phi(x))$ is a homomorphism.  This contradicts
	the fact that $\bb{T}$ is an obstruction for $\bb{H}^c$. 
	Therefore any $\bb{H}$-tree obstruction
	for $\bb{H}^c$ must also be an obstruction for $\bb{K}^{\emb}$,
	hence $\bb{H} \ret \bb{K}^{\emb}$.

	($\Leftarrow$) Let $\bb{H}$ be a bipartite graph that admits
	a $(k+1)$-NU polymorphism.  
	Let $\bb{H}_d$ be the strongly
	bipartite digraph made from $\bb{H}$ by directing the edges
	(in one of the two ways).
	$\bb{H}_d$ still admits a $(k+1)$-NU polymorphism by Lemma 3.
	Thus by Lemma 1, $\bb{H}^c_d$
	has a finite homomorphism duality consisting of $\mathcal{D}(\bb{H}^c_d)$ and a finite
	set $\mathcal{T}$ of directed $\bb{H}$-trees 
	with at most $k$ leaves which are critical obstructions for $\bb{H}^c_d$.
	Let $\mathcal{T}_u = \Set{\bb{T}_u : \bb{T} \in \mathcal{T}}$.

	We claim that $\mathcal{D}(\bb{H}^c) \cup \mathcal{T}_u$ is an
	undirected homomorphism
	duality for $\bb{H}^c$.  Certainly all members of the proposed
	duality are obstructions for $\bb{H}^c$.
	Suppose that $\bb{X}$ is an $\bb{H}$-coloured undirected graph
	that is an obstruction for $\bb{H}^c$.
	We may assume that no member of $\mathcal{D}(\bb{H}^c)$ admits a homomorphism to
	$\bb{X}$.
	Then $\bb{X}$ is bipartite and each coloured vertex is uniquely coloured.
	Moreover, since no elementary critical obstruction
	of type (B) admits a homomorphism to $\bb{X}$,
	we may direct the edges of $\bb{X}$ to yield a strongly bipartite
	digraph $\bb{X}_d$ that is consistently $\bb{H}_d$-coloured, in the sense
	that if a coloured vertex of $\bb{X}_d$ is a source (or sink) then its
	colour is a source (or sink) in $\bb{H}_d$.
	Since $\bb{X}_d$ is an obstruction for $\bb{H}_d$ and no
	elementary critical obstructions for $\bb{H}_d$ admit
	a homomorphism to $\bb{X}_d$, it must be that some $\bb{T} \in \mathcal{T}$
	is feasible for $\bb{X}_d$.  Thus $\bb{T}_u$ is feasible for
	$\bb{X}$, verifying the claim.

	Let $\bb{G}$ be a bipartite graph that contains $\bb{H}$
	as an induced subgraph and all $\bb{H}$-tree
	obstructions for $\bb{H}^c$ with at most $k$ leaves
	are also obstructions of $\bb{G}_{\bb{H}}$.
	Suppose that $\bb{G}_{\bb{H}} \nrightarrow \bb{H}^c$.
	Then since no elementary critical obstruction of
	$\bb{H}$ admits a homomorphism to $\bb{G}_{\bb{H}}$, it must be that
	some $\bb{T} \in \mathcal{T}_u$ admits a homomorphism to $\bb{G}_\bb{H}$.
	This is a contradiction since $\bb{T}$ is an $\bb{H}$-tree obstruction
	with at most $k$ leaves.  Thus it must be that $\bb{H} \ret \bb{G}$.

\end{proof}

\section{Further problems}

Bandelt et al. were able to characterize the internal strucutre of
bipartite graphs which admit a $3$-NU polymorphism using
the notion of the \textit{interval} between two vertices of a graph \cite{Bandelt ARI}.
Given a bipartite graph $\bb{H}$ and two vertices $u,v \in H$,
the \textit{interval between u and v} is the set of all vertices on a shortest 
path between $u$ and $v$,
\[
	I(u,v) = \Set{x \in H: d(x,u) + d(x,v) = d(u,v)}.
\]
A bipartite graph $\bb{H}$ admits a $3$-NU polymorphism if and only if
for all $u,v \in H$ with $d_{\bb{H}}(u,v) \geq 3$ the neighbours of $u$ in $I(u,v)$
all share a second common neighbour in $I(u,v)$.
A similar internal characterization of bipartite graphs which
admit an NU polymorphism of larger arities remains to be found.

\section*{Acknowledgements}
I'm thankful for the guidance that my supervisor, Ross Willard, 
generously provided throughout the course of this research.
I would like to acknowledge the support
provided by the University of Waterloo and the National Science
and Engineering Research Council of Canada.

\end{document}